\newtheorem{teor}{Theorem}
\newtheorem{cor}{Corollary}
\newtheorem{con}{Conjecture}
\newtheorem{lem}{Lemma}
\theoremstyle{definition}
\newtheorem{defi}{Definition}
\renewcommand{\subjclassname}{AMS \textup{2010} Mathematics Subject Classification\ }
\author{Max A. Alekseyev}
\address{Department of Mathematics, George Washington University\\
Washington, DC, USA} 
\email{maxal@gwu.edu}
\author{Jos\'{e} Mar\'{i}a Grau}
\address{Departamento de Matematicas, Universidad de Oviedo\\ Avda. Calvo Sotelo s/n, 33007 Oviedo, Spain}
\email{grau@uniovi.es}
\author{Antonio M. Oller-Marc\'{e}n}
\address{Centro Universitario de la Defensa de Zaragoza\\ Ctra. Huesca s/n, 50090 Zaragoza, Spain} 
\email{oller@unizar.es}
\DeclareMathOperator{\PrimeParts}{\textsc{PrimeParts}}
\DeclareMathOperator{\AllParts}{\textsc{AllParts}}
\title{Computing solutions to the congruence \\$\boldsymbol{1^n + 2^n + \dotsb + n^n\equiv p \pmod{n}}$}
\begin{document}

\begin{abstract}
It is well-known that the congruence $\sum_{i=1}^{ n} i^{ n} \equiv 1 \pmod{n}$ has exactly five solutions: $\{1,2,6,42,1806\}$. 
In this work, we characterize the solutions to the congruence $1^n + 2^n + \dotsb + n^n\equiv p \pmod{n}$ for every prime $p$. 
This characterization leads to an algorithm for computing all such solutions, when there is a finite number of them. More generally, our algorithm enables computing all the solutions below a much higher bound as compared to what can be achieved by a naive exhaustive search.
\end{abstract}

\maketitle
\subjclassname{11B99, 11A99, 11A07}

\keywords{Keywords: power sums, primary pseudoperfect numbers, algorithm} 

\section{Introduction}

There exist many Diophantine equations with ``few'' known solutions, whose search is hard both from the theoretical and computational points of view.
One of the best-known examples is given by the \emph{Erd\"{o}s--Moser equation} $\sum_{i=1}^{m-1}i^n =m^n$, 
for which it has been proved~\cite{BUT} that there is only a trivial solution $1^1+2^1=3^1$ when $m<1.485\cdot 10^{9321155}$. 
Other famous examples include \emph{Giuga's conjecture}~\cite{GIU} stating non-existence of composite numbers $n$ such that $\sum_{i=1}^{n-1}i^n \equiv -1 \pmod{n}$, 
which has been verified~\cite{96} for $n$ up to $10^{13800}$; 
and \emph{Lehmer's totient problem} asking for composite numbers $n$ such that $\varphi(n) \mid (n-1)$, which is shown to have no solutions below $10^{22}$ or with less than 14 prime divisors~\cite{co}. 
Among equations with ``few'' known solutions, we can mention $\sum_{p \mid N}\frac{1}{p}-\frac{1}{N} \in \mathbb{N}$ with only $12$ known solutions called \emph{Giuga numbers} 
(sequence \texttt{A007850} in the OEIS~\cite{OEIS}) and $\sum_{p \mid N}\frac{1}{p}+\frac{1}{N} =1$ with only $8$ known solutions (sequence \texttt{A054377} in the OEIS~\cite{OEIS}) called \emph{primary pseudoperfect numbers}~\cite{BUT}.

In some cases, the search for new solutions to an equation only leads to the extension of the set of integers for which no solution is known. 
In other cases, theoretical and computational effort succeed in finding all the solutions. This is the case, for instance, for the equation $1^n + 2^n + \dotsb + n^n\equiv 19 \pmod{n}$ 
that we will show to have exactly $8$ solutions, namely $\{ 1,2,6,19,38,114,798,34314\}$.

For positive integers $k,n$, we define $S_k(n):=\sum_{i=1}^{n} i^k$. 
We will deal with congruences of the form 
\begin{equation}\label{eq:main}
S_n(n) \equiv a \pmod{n},
\end{equation} 
which is equivalent to $S_n(n-1) \equiv a \pmod{n}$. The following lemma shows that congruences \eqref{eq:main} is also equivalent to 
\begin{equation}\label{eq:bern}
n\cdot B_n \equiv a \pmod{n},
\end{equation} 
where $B_n$ is the $n$-th Bernoulli number.\footnote{The congruence $r_1\equiv r_2\pmod{n}$ for rational numbers $r_1$, $r_2$ is understood as $n$ divides the numerator of $r_1-r_2$.} 

\begin{lem}\label{lem:bern} For any positive integer $n$,
$$
S_n(n) \equiv n\cdot B_n \pmod{n}.
$$
\end{lem}
\begin{proof}
By Faulhaber's formula, we have
\begin{equation}\label{eq:faulhaber}
S_n(n) = \frac{1}{n+1} \sum_{j=0}^n (-1)^j\cdot \binom{n+1}{j}\cdot B_j\cdot n^{n+1-j},
\end{equation}
where by convention $B_1=-\frac{1}{2}$. In particular, for $j=1$, we have that the numerator of $B_j n^{n+1-j}=-\frac{n^n}{2}\equiv 0\pmod{n}$. For any odd $j\ne 1$, we have $B_j=0$, and thus the corresponding term in \eqref{eq:faulhaber} is zero as well.

Consider an even $j$. The Von Staudt--Clausen theorem implies the denominator of $B_j$ is square-free (in fact, it equals the product of all primes $p$ such that $(p-1)\mid j$) \cite{KES}. It follows that the denominator of $B_j\cdot n$ is coprime to $n$, and thus
$B_j\cdot n^{n+1-j}\equiv 0\pmod{n^{n-j}}$. Hence, $B_j\cdot n^{n+1-j} \equiv 0\pmod{n}$ for all $j<n$. 
Now reduction of \eqref{eq:faulhaber} modulo $n$ completes the proof.
\end{proof}

Let $\mathcal{M}_a$ denote the set of positive $n$ satisfying \eqref{eq:main} and \eqref{eq:bern} (Table~\ref{tab:MaOEIS}). 
From the Von Staudt--Clausen theorem, it is easy to see that $\mathcal{M}_0$ consists of the odd positive integers.
It is known~\cite{Sondow2011,GOS} that $\mathcal{M}_1=\{1,2,6,42,1806\}$.

\begin{table}[!t]
\label{tab:MaOEIS}
\caption{Values of $a$ and the sequence indices corresponding to $\mathcal{M}_a$ that are currently present in the OEIS~\cite{OEIS}.
The stars indicate when $\mathcal{M}_a$ is known to be finite.
Finiteness of $\mathcal{M}_p$ for primes $p\in\{2,3,7,19,43,79,193\}$ as well as for $p$ satisfying Theorem~\ref{th:COND} is established in the present work.}
\begin{center}
\begin{tabular}{|c||c|c|c|c|c|c|c|}
\hline
$a$ & $0^{\star}$ & $1^{\star}$ & $2^{\star}$ & $3^{\star}$ & $4$ & $5$ & $6$ \\ 
\hline
Index & {\tt A005408} & {\tt A014117} & {\tt A226960} & {\tt A226961} & {\tt A226962} & {\tt A226963} & {\tt A226964} \\ 
\hline
\hline
$a$ & $7^{\star}$ & $8$ & $9$ & $19^{\star}$ & $43^{\star}$ & $79^{\star}$ & $193^{\star}$ \\ 
\hline
Index & {\tt A226965} & {\tt A226966} & {\tt A226967} & {\tt A280041} & {\tt A280043} & {\tt A302343} & {\tt A302344} \\ 
\hline
\end{tabular}
\end{center}
\end{table}

In the present study, we focus on the case of $a$ being prime and address the problem of computing $\mathcal{M}_a$. 
We encounter both aforementioned situations:
in some cases, we are able to compute all the solutions to \eqref{eq:main} (and thus prove the finiteness of $\mathcal{M}_a$),
while 
in other cases, we find all solutions below certain large bounds (which are infeasible to reach by brute force). 

The main contribution of our work is the characterization of the solutions to the congruence \eqref{eq:main} and 
the development of an algorithm for computing the possible prime divisors of the solutions. 
Then, if the set of possible prime divisors is finite, the search for solutions can be restricted to products of these divisors and thus determine all the solutions.
Furthermore, we establish a connection of this problem to \emph{weak primary pseudoperfect numbers}, which enables computing all the solutions below $10^{30}$ with little computational effort.

\section{Characterization of $\mathcal{M}_p$}

The following lemma will be useful in the sequel.

\begin{lem}[\cite{GMO}] \label{LEM:GMO}
Let $d$, $k$, $n$, and $t$ be positive integers.
\begin{enumerate}[(i)]
\item If $d\mid n$, then
$$S_k(n)\equiv\frac{n}{d}\,S_k(d)\pmod{d}.$$
\item If $p>2$ is a prime, then
$$S_k(p^t)\equiv\begin{cases} 
-p^{t-1}\pmod{p^t}, & if\ p-1 \mid k;\\
0\qquad \pmod{p^t}, & otherwise.
\end{cases}$$
\item We have
$$S_k(2^t)\equiv\begin{cases} 
2^{t-1}\!\!\!\!\pmod{2^t},& \textrm{if\ $t=1$, or $t>1$ and $k>1$ is even};\\ 
-1\pmod{2^t}, & \textrm{if $t>1$ and $k=1$}; \\ 
0\ \ \pmod{2^t}, & \textrm{if $t>1$ and $k>1$ is odd}.\end{cases}$$
\end{enumerate}
\end{lem}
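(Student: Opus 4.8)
The plan is to prove the three parts in order, using (i) as the reduction tool for (ii) and (iii). Part (i) is elementary: writing $n=(n/d)d$ and partitioning $\{1,\dots,n\}$ into the $n/d$ blocks $\{jd+1,\dots,jd+d\}$ with $0\le j<n/d$, each term satisfies $(jd+r)^k\equiv r^k\pmod d$, so every block contributes $\sum_{r=1}^{d}r^k=S_k(d)$ modulo $d$, and summing the $n/d$ blocks gives $S_k(n)\equiv (n/d)S_k(d)\pmod d$.

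For (ii) I work with the complete residue system, $S_k(p^t)\equiv\sum_{i=0}^{p^t-1}i^k\pmod{p^t}$. The quick half of the dichotomy comes from a substitution: for any unit $a$, the map $i\mapsto ai$ permutes residues, so $a^kS_k(p^t)\equiv S_k(p^t)$, that is $(a^k-1)S_k(p^t)\equiv 0\pmod{p^t}$. Since $a^k\equiv 1\pmod p$ holds for every unit $a$ exactly when $p-1\mid k$, in the case $p-1\nmid k$ I may pick a unit $a$ with $a^k\not\equiv 1\pmod p$; then $a^k-1$ is invertible modulo $p^t$ and $S_k(p^t)\equiv 0$ follows at once. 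The case $p-1\mid k$ is where this argument collapses and is the heart of the proof. There I group residues by their class $r$ modulo $p$: for $r\not\equiv 0$ a binomial expansion gives $\sum_{j=0}^{p^{t-1}-1}(r+pj)^k\equiv p^{t-1}r^k\pmod{p^t}$, the linear term $\binom{k}{1}r^{k-1}p\sum_j j$ being divisible by $p^t$ because $p$ is odd, while the class $r\equiv 0$ contributes $p^kS_k(p^{t-1})$, which is $\equiv 0\pmod{p^t}$ by induction on $t$. Hence $S_k(p^t)\equiv p^{t-1}\sum_{r=1}^{p-1}r^k\pmod{p^t}$, and as this is multiplied by $p^{t-1}$ only the residue of $\sum_{r=1}^{p-1}r^k$ modulo $p$ matters, which is $-1$ when $p-1\mid k$ and $0$ otherwise, recovering both stated values.

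For (iii) the prime $2$ is treated on its own, precisely because the argument for (ii) used both that $p$ is odd (to discard the linear term) and that the unit group is cyclic, whereas $(\mathbb{Z}/2^t\mathbb{Z})^\times$ is non-cyclic for $t\ge 3$; this is why the answer is governed by the parity of $k$. I split $\{1,\dots,2^t\}$ into its even part, contributing $2^kS_k(2^{t-1})$ and absorbed by the induction, and its odd part $\sum_{0\le m<2^{t-1}}(2m+1)^k$. For odd $k$ the substitution $i\mapsto 2^t-i$ pairs the odd residues, so the odd part vanishes; for even $k$ the same binomial expansion applies, the linear term now disappearing because $2\mid k$, leaving the odd part $\equiv 2^{t-1}$. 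Combining these with $2^kS_k(2^{t-1})\equiv 0\pmod{2^t}$ for $k\ge 2$, and checking the low cases directly from $S_k(2)=1+2^k$ (for $t=1$) and the closed form $S_1(2^t)=2^{t-1}(2^t+1)$ (for $k=1$), closes the induction. I expect the main obstacle throughout to be the case $p-1\mid k$ of (ii) together with the valuation bookkeeping needed to show that the quadratic and higher binomial terms, and the tails $p^kS_k(p^{t-1})$ and $2^kS_k(2^{t-1})$, are divisible by the full modulus.
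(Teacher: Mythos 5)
The paper does not actually prove this lemma --- it is quoted from \cite{GMO} --- so there is no internal proof to compare against; judged on its own terms, your part (i), your treatment of (ii), and most of (iii) follow the standard route and are sound in outline. The decisive problem is the branch $t>1$, $k=1$ of part (iii). Your own closed form gives $S_1(2^t)=2^{t-1}(2^t+1)=2^{2t-1}+2^{t-1}\equiv 2^{t-1}\pmod{2^t}$ (concretely, $S_1(4)=10\equiv 2\pmod 4$), which is \emph{not} $-1\equiv 2^t-1$. Your pairing argument says the same thing: for $k=1$ the odd residues still cancel in pairs, leaving $2S_1(2^{t-1})=2^{t-1}(2^{t-1}+1)\equiv 2^{t-1}$. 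So the case you claim to ``close'' by direct check is in fact inconsistent with the statement as printed: that branch of the lemma is false as stated (the correct value is $2^{t-1}$; the slip is harmless downstream because the lemma is only ever applied with $k=n$ and $2^t\mid n$, which forces $k>1$ whenever $t>1$). A proposal that asserts a verification succeeds where the computation visibly gives a different answer has a genuine gap --- you must either flag and correct the statement or concede that this case cannot be proved.

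The other soft spot, which you flag yourself, is the quadratic and higher binomial terms in (ii) and the analogous terms and tails in (iii). These do close, but only if you strengthen the induction hypothesis to the full statement of (ii)--(iii) at level $t-1$ \emph{for every exponent} $l$, not just $l=k$: that hypothesis gives $v_p\bigl(S_l(p^{t-1})\bigr)\ge t-2$ for all $l\ge 1$, and since $\sum_{j=0}^{p^{t-1}-1}j^l$ differs from $S_l(p^{t-1})$ by $p^{(t-1)l}$, also of valuation at least $t-2$, the $l$-th binomial term $\binom{k}{l}r^{k-l}p^l\sum_j j^l$ has valuation at least $l+t-2\ge t$ for $l\ge 2$. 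Likewise the tail $p^kS_k(p^{t-1})$ is divisible by $p^t$ only because $k\ge 2$, which you should note is automatic in the relevant case ($p$ odd and $p-1\mid k$ exclude $k=1$), and the base case $t=1$ silently uses the classical fact $\sum_{r=1}^{p-1}r^k\equiv -1$ or $0\pmod p$ according as $p-1\mid k$ or not, which deserves its primitive-root one-liner. None of these is fatal, but as written they are promissory notes rather than proof.
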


The following theorem gives a characterization of the set $\mathcal{M}_p$ in terms of the prime power factorization of its elements.

\begin{teor}\label{TEOR:P}
Let $p$ be a prime number. Then $n\in\mathcal{M}_p$ if and only if the following conditions hold:
\begin{enumerate}[(i)]
\item The prime power factorization of $n$ has form $n=p^sq_1\cdots q_r$, where $p,q_1,\dots, q_r$ are pairwise distinct primes and 
$0\leq s\leq 2$.
\item For every $i\in\{1,\dots,r\}$, $(q_i-1)\mid n$ and $\nicefrac{n}{q_i} + p \equiv 0\pmod{q_i}$.
\item If $s=1$, then $(p-1)\nmid n$.
\item If $s=2$, then $(p-1)\mid n$ and $\nicefrac{n}{p^2} + 1 \equiv 0\pmod{p}$.
\end{enumerate}
\end{teor}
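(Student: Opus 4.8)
The plan is to reduce the single congruence $S_n(n)\equiv p\pmod n$ to a family of congruences, one for each prime power exactly dividing $n$, and to analyze each of them separately via Lemma \ref{LEM:GMO}. Writing the factorization of $n$ as a product of prime powers $q=\ell^t$, the Chinese Remainder Theorem shows that $S_n(n)\equiv p\pmod n$ holds if and only if $S_n(n)\equiv p\pmod{\ell^t}$ for every $\ell^t\| n$. Since every condition in the statement (i)--(iv) is, after unwinding, a constraint attached to a single prime $\ell\mid n$ (condition ii) is local at each $q_i$, conditions iii)--iv) are local at $p$, and the shape condition i) is the conjunction of the local exponent bounds), the whole biconditional splits as a conjunction of ``local'' biconditionals, one per prime, and it suffices to establish each of these.

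For a fixed $\ell^t\| n$, the first step is to apply part i) of Lemma \ref{LEM:GMO} with $d=\ell^t$ and $k=n$, giving $S_n(n)\equiv (n/\ell^t)\,S_n(\ell^t)\pmod{\ell^t}$; thus the local congruence becomes $(n/\ell^t)\,S_n(\ell^t)\equiv p\pmod{\ell^t}$. I would then substitute the explicit value of $S_n(\ell^t)$ supplied by part ii) (when $\ell$ is odd) or part iii) (when $\ell=2$), and read off what the resulting congruence says. Because each substitution is an identity modulo $\ell^t$ and the case splits below are exhaustive, the chain of deductions is reversible, so a single computation yields both directions of the biconditional at the prime $\ell$.

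The case analysis proceeds as follows. For a prime $\ell\neq p$ dividing $n$: if $\ell-1\nmid n$ then the lemma forces $S_n(\ell^t)\equiv 0$, whence $\ell^t\mid p$, impossible for a prime $p$ with $\ell\neq p$; hence $\ell-1\mid n$ necessarily, and then $S_n(\ell^t)\equiv -\ell^{t-1}$ reduces the local congruence to $n/\ell+p\equiv 0\pmod{\ell^t}$. Reducing this modulo $\ell$ and using $\gcd(n/\ell^t,\ell)=1$ forces $t=1$, which is exactly condition ii). For $\ell=p$ I would carry out the analogous but more delicate bookkeeping on the exponent $s$: when $p-1\mid n$ the congruence $n/p+p\equiv 0\pmod{p^s}$ is incompatible with $s=1$ (it would force $p\mid n/p^s$) and with $s\geq 3$ (it already fails modulo $p^2$), and for $s=2$ collapses to $n/p^2+1\equiv 0\pmod p$; whereas if $p-1\nmid n$ one gets $p^s\mid p$, i.e. $s\leq 1$. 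Collecting these outcomes yields i), iii) and iv).

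The main obstacle is precisely the $\ell=p$ part, and within it the case $p=2$, where part iii) rather than part ii) governs $S_n(2^t)$. There one must check by hand that $s=1$ is impossible (it would force $4\mid n$), that $s=2$ automatically satisfies iv) since $p-1=1\mid n$ and $n/4$ is odd, and that $s\geq 3$ fails modulo $p^2$. Throughout these reductions the only genuinely error-prone point is keeping careful track of which power of $\ell$ exactly divides $n$ as opposed to the cofactor $n/\ell^t$, so that the various congruences are read at the correct modulus.
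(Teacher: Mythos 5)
Your proposal is correct and follows essentially the same route as the paper's own proof: split $S_n(n)\equiv p\pmod n$ into local congruences at each prime power exactly dividing $n$, reduce each via Lemma \ref{LEM:GMO} i) to $(n/\ell^t)S_n(\ell^t)\equiv p\pmod{\ell^t}$, and then read off the exponent bounds and divisibility conditions from parts ii) and iii) of that lemma. Your case analysis (including the forced $t=1$ for $\ell\neq p$, the $s\le 2$ bound at $p$, and the separate $p=2$ bookkeeping) matches the paper's argument, and is in fact spelled out in somewhat more detail.
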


\begin{proof}
We will work out the case of odd $p$; for $p=2$, the proof is similar. 

Let $n=2^tp^sq_1^{u_1}\cdots q_r^{u_r}$ be the prime power factorization of $n$. 
Then $S_n(n)\equiv p\pmod{n}$ if and only if 
$S_n(n)\equiv p\pmod{2^t}$, $S_n(n)\equiv p\pmod{p^s}$, and $S_n(n)\equiv p\pmod{q_i^{u_i}}$ for all $i\in\{1,\dots,r\}$.

By Lemma \ref{LEM:GMO}, $S_n(n)\equiv \dfrac{n}{2^t}S_n(2^t)\pmod{2^t}$, so $S_n(n)\equiv p\pmod{2^t}$ 
if and only if $t\leq 1$ with $\nicefrac{n}2+p\equiv 0\pmod{2}$ if $t=1$ by Lemma \ref{LEM:GMO}(iii).

Furthermore, by Lemma \ref{LEM:GMO}(i), $S_n(n)\equiv \dfrac{n}{p^s}S_n(p^s)\pmod{p^s}$, 
so $S_n(n)\equiv p\pmod{p^s}$ if and only if $\dfrac{n}{p^s}S_n(p^s)\equiv p\pmod{p^s}$, and we apply Lemma \ref{LEM:GMO}(ii) repeatedly. 
If $s=1$, the latter congruence holds if and only if $(p-1)\nmid n$. 
If $s>1$, it holds if and only if $(p-1)\mid n$ and $\nicefrac{n}{p^2}+1\equiv 0\pmod{p^{s-1}}$, with the latter congruence being possible only if $s\leq 2$.

Finally, by Lemma \ref{LEM:GMO}(i) again, $S_n(n)\equiv \dfrac{n}{q_i^{u_i}}S_n(q_i^{u_i})\pmod{q_i^{u_i}}$ 
and hence, since $p\neq q_i$, it follows from Lemma \ref{LEM:GMO}(iii) that $S_n(n)\equiv p\pmod{q_i^{u_i}}$ 
if and only if $(q_i-1)\mid n$ and $\nicefrac{n}{q_i}+p\equiv 0\pmod{q_i^{u_i}}$, with the latter congruence being possible only if $u_i\leq 1$.
\end{proof}

Theorem~\ref{TEOR:P} motivates us to consider a decomposition $\mathcal{M}_p=\mathcal{M}_p^{(0)}\cup\mathcal{M}_p^{(1)}\cup\mathcal{M}_p^{(2)}$, where
\begin{align*}
\mathcal{M}_p^{(0)}&=\{n\in\mathcal{M}_p\ :\ p\nmid n\},\\ 
\mathcal{M}_p^{(1)}&=\{n\in\mathcal{M}_p\ :\ p\mid\mid n\},\\
\mathcal{M}_p^{(2)}&=\{n\in\mathcal{M}_p\ :\ p^2\mid\mid n\}.
\end{align*}

We will now study each of these sets separately, using the following results.

\begin{lem}[\cite{GOS}]
\label{LEM:primes}
Let $\mathcal{P}$ be a non-empty set of primes $p$ such that
\begin{enumerate}[(i)]
\item $p-1$ is square-free; and
\item if $q$ is a prime divisor of $p-1$, then $q\in\mathcal{P}$.
\end{enumerate}
Then $\mathcal{P}$ is one of the sets $\{2\},\{2,3\},\{2,3,7\},$ or $\{2,3,7,43\}.$
\end{lem}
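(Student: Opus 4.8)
The plan is to argue by a minimal-counterexample argument (equivalently, by peeling off the smallest prime), exploiting the two closure conditions to force the primes of $\mathcal{P}$ into a rigid increasing sequence. First I would record two preliminary facts. Since $\mathcal{P}$ is nonempty it contains some prime $p$; if $p$ is odd then $2\mid p-1$, so condition ii) forces $2\in\mathcal{P}$, and thus $2\in\mathcal{P}$ in every case. Second, the decisive structural observation: for any $p\in\mathcal{P}$, condition i) makes $p-1$ square-free while condition ii) places every prime factor of $p-1$ inside $\mathcal{P}$, necessarily strictly below $p$. Hence $p-1$ is a square-free number all of whose prime factors already belong to $\mathcal{P}$.

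The core of the argument is to prove $\mathcal{P}\subseteq\{2,3,7,43\}$. Suppose not, and let $p$ be the smallest prime of $\mathcal{P}$ lying outside $\{2,3,7,43\}$. Then every prime of $\mathcal{P}$ below $p$ belongs to $\{2,3,7,43\}$, so by the structural observation $p-1$ is a square-free divisor of $2\cdot 3\cdot 7\cdot 43=1806$. I would then run through the sixteen square-free divisors $d$ of $1806$ and test which values $d+1$ are prime, finding that only $d\in\{1,2,6,42\}$ succeed and hence $p\in\{2,3,7,43\}$, a contradiction. The single most important instance is $d=1806$, which gives $1806+1=1807=13\cdot 139$, a \emph{composite} number; this, together with the compositeness of the remaining candidates, is precisely what stops the sequence from continuing and forces $\mathcal{P}$ to be finite. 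This finite primality check, and the compositeness of $1807$ in particular, is the crux of the proof and the only genuinely computational step.

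Finally, with $\mathcal{P}\subseteq\{2,3,7,43\}$ in hand, I would determine which subsets actually satisfy the hypotheses. Applying condition ii) to the primes $43$, $7$, and $3$ (whose predecessors are $42=2\cdot3\cdot7$, $6=2\cdot3$, and $2$) yields the implications $43\in\mathcal{P}\Rightarrow 7\in\mathcal{P}$, $7\in\mathcal{P}\Rightarrow 3\in\mathcal{P}$, and $3\in\mathcal{P}\Rightarrow 2\in\mathcal{P}$. Membership is therefore forced downward along the chain, so any admissible nonempty $\mathcal{P}$ must be one of the prefixes $\{2\}$, $\{2,3\}$, $\{2,3,7\}$, or $\{2,3,7,43\}$. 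A direct check then confirms all four are admissible: each relevant $p-1\in\{1,2,6,42\}$ is square-free (condition i)) and has all its prime factors in the corresponding set (condition ii)), completing the proof.
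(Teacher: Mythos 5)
Your proposal is correct and complete. The paper itself gives no proof of this lemma --- it defers to \cite{GOS} --- and your argument (minimal counterexample forcing $p-1$ to be a square-free divisor of $1806$, the finite primality check with $1807=13\cdot139$ as the terminating case, then downward closure along $43\to7\to3\to2$ to identify the four admissible prefixes) is exactly the standard route taken there, so there is nothing to flag.
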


\begin{lem}[\cite{GOS}]
\label{LEM:N}
Let $\mathcal{N}$ be a set of positive integers $\nu$ such that
\begin{enumerate}[(i)]
\item $\nu$ is square-free, and
\item if $p$ is a prime divisor of $\nu$, then $p-1$ divides $\nu$.
\end{enumerate}
Then $\mathcal{N}\subseteq\{1,2,6,42,1806\}.$
\end{lem}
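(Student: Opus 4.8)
The plan is to reduce the claim to Lemma~\ref{LEM:primes} by passing from each integer $\nu\in\mathcal{N}$ to its set of prime divisors. First I would dispose of the trivial case $\nu=1$, which has no prime divisors and already lies in $\{1,2,6,42,1806\}$. For $\nu>1$, set $\mathcal{P}:=\{q\ \text{prime} : q\mid\nu\}$ and note that, since $\nu$ is square-free, $\nu=\prod_{q\in\mathcal{P}}q$; thus it suffices to determine the possible sets $\mathcal{P}$. Only the inclusion $\mathcal{N}\subseteq\{1,2,6,42,1806\}$ is asserted, so no converse verification is needed.

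The key step is to check that $\mathcal{P}$ satisfies the two hypotheses of Lemma~\ref{LEM:primes}. For hypothesis i), take any $p\in\mathcal{P}$; by condition ii) defining $\mathcal{N}$ we have $p-1\mid\nu$, and since $\nu$ is square-free every divisor of $\nu$, in particular $p-1$, is square-free. For hypothesis ii), if $q$ is a prime dividing $p-1$ then $q\mid p-1\mid\nu$, so $q\in\mathcal{P}$. Hence $\mathcal{P}$ is a non-empty set of primes closed in the required sense, and Lemma~\ref{LEM:primes} forces $\mathcal{P}\in\{\{2\},\{2,3\},\{2,3,7\},\{2,3,7,43\}\}$.

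Finally I would translate each admissible $\mathcal{P}$ back into a value of $\nu$ via $\nu=\prod_{q\in\mathcal{P}}q$, obtaining respectively $\nu\in\{2,6,42,1806\}$; together with $\nu=1$ this gives $\mathcal{N}\subseteq\{1,2,6,42,1806\}$. I do not anticipate a serious obstacle, since once Lemma~\ref{LEM:primes} is in hand essentially all of the arithmetic content is packaged inside it; the only point demanding care is the implication that $\nu$ square-free together with $p-1\mid\nu$ yields $p-1$ square-free, which is exactly what is needed to meet hypothesis i) of Lemma~\ref{LEM:primes}. The square-freeness of $\nu$ is also precisely what guarantees the recovery $\nu=\prod_{q\in\mathcal{P}}q$, which closes the argument.
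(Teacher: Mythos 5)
Your proposal is correct. The paper does not actually print a proof of this lemma (it defers both this and Lemma~\ref{LEM:primes} to the reference \cite{GOS}), but your reduction --- passing from $\nu$ to its set of prime divisors, verifying the two closure hypotheses of Lemma~\ref{LEM:primes} (with square-freeness of $\nu$ giving square-freeness of each $p-1$, and $q\mid p-1\mid\nu$ giving the closure under prime divisors of $p-1$), handling $\nu=1$ separately since Lemma~\ref{LEM:primes} requires non-emptiness, and then recovering $\nu=\prod_{q\in\mathcal{P}}q$ from square-freeness --- is precisely the intended argument and is complete.
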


Lemma~\ref{LEM:N} implies the following result concerning $\mathcal{M}_p^{(0)}$.

\begin{lem}\label{PROP:M0}
Let $p$ be a prime. 
Then $\mathcal{M}_p^{(0)}\subseteq\{1,2,6,42,1806\}=\mathcal{M}_1$.
\end{lem}
\begin{proof}
Let $n\in\mathcal{M}_p^{(0)}$. Theorem~\ref{TEOR:P}(i) implies that $n$ is square-free. Moreover, Theorem~\ref{TEOR:P}(ii) implies that if $q$ is a prime divisor of $n$, then $q-1$ divides $n$. Hence, we can apply Lemma \ref{LEM:N} and the result follows.
\end{proof}

The following result is straightforward and completely determines the set $\mathcal{M}_p^{(0)}$.

\begin{lem}
Let $p$ be a prime. Then $\mathcal{M}_p^{(0)}=\{n\in\mathcal{M}_1:p\equiv 1\pmod{n}\}$.
\end{lem}

To study the set $\mathcal{M}_p^{(1)}$, we introduce the following set of primes associated with $p$.

\begin{defi}
For a prime $p$, we let $\mathcal{Q}_p$ be the set of prime numbers such that $q \in \mathcal{Q}_p$ if and only if the following conditions hold:
\begin{enumerate}[(i)]
\item $q-1$ is square-free;
\item $(p-1) \nmid (q-1)$;
\item if $t$ is a prime divisor of $q-1$,  then $t=p$ or $t\in \mathcal{Q}_p$.
\end{enumerate}
\end{defi}

In addition, we define the following set of integers composed of primes in $\mathcal{Q}_p$:
\begin{equation}\label{eq:Np}
\mathcal{N}_p:=\{n\in\mathbb{N}\ :\ \textrm{$n$ is square-free, $(p-1)\nmid n$, and for every prime $q\mid n$},\, q\in\mathcal{Q}_p\}.     
\end{equation}

\begin{cor}
Let $p$ be a prime. Then $\mathcal{M}_p^{(1)}\subseteq p\cdot\mathcal{N}_p$.
\end{cor}

\begin{proof}
Let $n\in\mathcal{M}_p^{(1)}$. Theorem \ref{TEOR:P} implies that $\nicefrac{n}{p}\in \mathcal{N}_p$ completing the proof.
\end{proof}

Finally, let us analyze the set $\mathcal{M}_p^{(2)}$. We will see that this set is empty in most cases. To do so, we first need the following lemma.

\begin{lem}\label{LEM:MP2}
Let $n\in\mathcal{M}_p^{(2)}$. If $q<p$ is a prime such that $q\mid n$, then $q\in\{2,3,7,43\}$.
\end{lem}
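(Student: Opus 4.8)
The plan is to apply Lemma~\ref{LEM:primes} to the set of ``small'' prime divisors of $n$, namely
$$\mathcal{P}:=\{q : q\text{ is prime},\ q\mid n,\ q<p\}.$$
If $\mathcal{P}=\varnothing$ there is nothing to prove, since then no prime $q<p$ divides $n$. Otherwise, I would show that $\mathcal{P}$ satisfies hypotheses i) and ii) of Lemma~\ref{LEM:primes}; the lemma then forces $\mathcal{P}$ to be one of $\{2\},\{2,3\},\{2,3,7\},\{2,3,7,43\}$, each of which is contained in $\{2,3,7,43\}$, which is exactly the desired conclusion. Before checking the hypotheses I would record the shape of $n$: since $n\in\mathcal{M}_p^{(2)}$, Theorem~\ref{TEOR:P} i) gives $n=p^2q_1\cdots q_r$ with the $q_i$ distinct primes different from $p$, so that $m:=q_1\cdots q_r$ is square-free and coprime to $p$.

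To verify hypothesis i), I would fix $q\in\mathcal{P}$ and use Theorem~\ref{TEOR:P} ii), which yields $q-1\mid n=p^2 m$. The assumption $q<p$ gives $0<q-1<p$, hence $\gcd(q-1,p)=1$ and therefore $q-1\mid m$; since $m$ is square-free, so is $q-1$. For hypothesis ii), I would take a prime $t$ dividing $q-1$ for some $q\in\mathcal{P}$: then $t\mid q-1\mid n$ gives $t\mid n$, while $t\le q-1<q<p$ gives $t<p$, so $t\in\mathcal{P}$, which is precisely the required closure property.

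The argument is short, and I do not expect a genuine obstacle; the one point that needs care is the square-freeness step, where the hypothesis $q<p$ is used twice. It guarantees $\gcd(q-1,p)=1$, so that $q-1$ divides the square-free part $m$ rather than possibly absorbing the factor $p^2$ (which would break square-freeness), and it also keeps every prime divisor $t$ of $q-1$ inside $\mathcal{P}$. Once both hypotheses of Lemma~\ref{LEM:primes} are confirmed, the conclusion follows immediately.
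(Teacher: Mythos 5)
Your proof is correct and follows essentially the same route as the paper: both apply Lemma~\ref{LEM:primes} to the set of primes $q<p$ dividing $n$, using Theorem~\ref{TEOR:P} to verify the two hypotheses (the paper takes this set over all $n\in\mathcal{M}_p^{(2)}$ at once, while you fix a single $n$, which is if anything slightly cleaner). Your explicit verification of square-freeness via $\gcd(q-1,p)=1$ and the closure property is exactly the detail the paper leaves implicit.
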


\begin{proof}
Let us consider the set of primes $\{q\ :\ \textrm{$q<p$ and $q\mid n$ for some $n\in\mathcal{M}_p^{(2)}$}\}$. 
Theorem~\ref{TEOR:P} implies that this set satisfies the conditions of Lemma \ref{LEM:primes}, completing the proof.
\end{proof}

\begin{cor}
For any prime $p\notin \{2,3,7,43\}$, the set $\mathcal{M}_p^{(2)}$ is empty.
\end{cor}

\begin{proof}
Assume that $n\in\mathcal{M}_p^{(2)}$. Since Theorem~\ref{TEOR:P} implies that $n=p^2q_1\cdots q_r$ and $(p-1)\mid n$, it follows that $p-1$ is square-free. Moreover, for the set of primes $S:=\{q\ :\ q\mid (p-1)\}$, Lemma \ref{LEM:MP2} implies that $S\subseteq\{2,3,7,43\}$. 
Thus, $p$ is a prime such that $p-1$ is square-free with prime divisors from the set $\{2,3,7,43\}$. 
It is easy to see that the only such primes are precisely $\{2,3,7,43\}$.
\end{proof}

The following result shows that in the remaining cases (i.e., for $p\in\{2,3,7,43\}$), the set $\mathcal{M}_p^{(2)}$ is also finite.

\begin{cor}\label{PROP:INC}
Let $p\in\{2,3,7,43\}$. Then $\mathcal{M}_p^{(2)}\subseteq p^2\cdot\mathcal{M}_1$.
\end{cor}

\begin{proof}
Define the set of primes $S:=\{q\ :\ q\ne p,\ q\mid n\ \textrm{for some $n\in\mathcal{M}_p^{(2)}$}\}$. 
Theorem~\ref{TEOR:P} implies that the set $S\cup\{p\}$ satisfies the conditions of Lemma~\ref{LEM:primes}, and hence $S\cup\{p\}\subseteq\{2,3,7,43\}$, i.e., $S\subsetneq\{2,3,7,43\}$. 
Now, the statement follows from the fact that every element in $\mathcal{M}_p^{(2)}$ is of the form $p^2q_1\cdots q_r$, where each $q_i\in S$.
\end{proof}

\begin{cor}\label{COR:DESC}
Let $p$ be a prime. Then
$$\mathcal{M}_p=\begin{cases}
\mathcal{M}_p^{(0)}\cup\mathcal{M}_p^{(1)}\subseteq\mathcal{M}_1\cup p\cdot \mathcal{N}_p, & \textrm{if $p\notin \{2,3,7,43\}$;}\\ 
\mathcal{M}_p^{(0)}\cup\mathcal{M}_p^{(1)}\cup\mathcal{M}_p^{(2)}\subseteq\mathcal{M}_1\cup p\cdot \mathcal{N}_p\cup p^2\cdot\mathcal{M}_1, & \textrm{otherwise.}
\end{cases}$$
In particular, if $\mathcal{N}_p$ is finite, then so is $\mathcal{M}_p$.
\end{cor}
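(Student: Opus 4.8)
The plan is to assemble the corollary directly from the structural results already established, since it is essentially a bookkeeping consequence of them. First I would observe that by Theorem \ref{TEOR:P} i) every $n\in\mathcal{M}_p$ has $p$-adic valuation $s$ with $0\leq s\leq 2$. Partitioning $\mathcal{M}_p$ according to the three possible values $s=0,1,2$ yields the decomposition $\mathcal{M}_p=\mathcal{M}_p^{(0)}\cup\mathcal{M}_p^{(1)}\cup\mathcal{M}_p^{(2)}$, which is exactly the decomposition introduced immediately after the theorem. This step requires nothing beyond condition i) of Theorem \ref{TEOR:P}.

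Next I would feed in the three containment results one class at a time. Proposition \ref{PROP:M0} gives $\mathcal{M}_p^{(0)}\subseteq\mathcal{M}_1$; the proposition bounding the middle class gives $\mathcal{M}_p^{(1)}\subseteq p\cdot\mathcal{N}_p$; and the behaviour of the top class depends on $p$. Here I would split into two cases. If $p\notin\{2,3,7,43\}$, the proposition showing that $\mathcal{M}_p^{(2)}$ is empty lets me drop the third class entirely, giving $\mathcal{M}_p\subseteq\mathcal{M}_1\cup p\cdot\mathcal{N}_p$. If instead $p\in\{2,3,7,43\}$, Proposition \ref{PROP:INC} supplies $\mathcal{M}_p^{(2)}\subseteq p^2\cdot\mathcal{M}_1$, and substituting the three bounds produces $\mathcal{M}_p\subseteq\mathcal{M}_1\cup p\cdot\mathcal{N}_p\cup p^2\cdot\mathcal{M}_1$. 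Taking the union of the containments in each case reproduces the two lines of the displayed formula.

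Finally, for the ``in particular'' clause I would argue finiteness term by term. The set $\mathcal{M}_1=\{1,2,6,42,1806\}$ is finite and fixed, so $p^2\cdot\mathcal{M}_1$ is a finite set of five integers; the scaling $n\mapsto pn$ is injective, so $p\cdot\mathcal{N}_p$ has the same cardinality as $\mathcal{N}_p$. Hence, whenever $\mathcal{N}_p$ is finite, each piece of the right-hand side is finite, the union is finite, and $\mathcal{M}_p$, being contained in it, is finite as well.

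I do not expect a genuine obstacle: the work has all been done in the preceding propositions, and the corollary only records the resulting union together with the elementary finiteness remark. The one point deserving a moment of care is making sure that the case split on whether $p\in\{2,3,7,43\}$ is exhaustive and matches the hypotheses under which each of the two propositions about $\mathcal{M}_p^{(2)}$ applies, so that the third class is always correctly accounted for in both lines of the statement.
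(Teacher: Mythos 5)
Your proposal is correct and matches the paper's (implicit) reasoning exactly: the corollary is stated without a separate proof precisely because it is the union of the valuation-based decomposition from Theorem \ref{TEOR:P} i) with Proposition \ref{PROP:M0}, the containment $\mathcal{M}_p^{(1)}\subseteq p\cdot\mathcal{N}_p$, the emptiness of $\mathcal{M}_p^{(2)}$ for $p\neq 2,3,7,43$, and Proposition \ref{PROP:INC}, followed by the elementary finiteness observation. Nothing in your write-up deviates from or adds to what the paper intends, so it is a faithful reconstruction.
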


\begin{cor}\label{PROP:743}
\begin{align*}
\mathcal{M}_7&=\{1,2,6,7,14,294,12642\},\\ 
\mathcal{M}_{43}&=\{1,2,6,42,43,86,258,77658\}.
\end{align*}
\end{cor}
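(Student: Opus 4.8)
The statement asserts two explicit finite lists, so I would split the proof into a \emph{membership} direction (each listed integer really lies in $\mathcal{M}_p$) and a \emph{completeness} direction (nothing else does). Membership is the easy half: for each listed integer I would simply verify conditions i)--iv) of Theorem \ref{TEOR:P}. For instance, writing $12642=2\cdot3\cdot7^2\cdot43$ one has $s=2$, and checks that $6=p-1\mid n$ with $n/49+1=259\equiv0\pmod 7$ (condition iv), and that each $q\in\{2,3,43\}$ satisfies $q-1\mid n$ together with $n/q+7\equiv0\pmod q$ (condition ii); the remaining integers are analogous and shorter. For completeness I would use the decomposition $\mathcal{M}_p=\mathcal{M}_p^{(0)}\cup\mathcal{M}_p^{(1)}\cup\mathcal{M}_p^{(2)}$ for $p=7$ and $p=43$ and treat each piece separately.

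The pieces $\mathcal{M}_p^{(0)}$ and $\mathcal{M}_p^{(1)}$ are routine. Since $\mathcal{M}_p^{(0)}=\{n\in\mathcal{M}_1:p\equiv1\pmod n\}$, I would just test the five elements of $\mathcal{M}_1=\{1,2,6,42,1806\}$, obtaining $\{1,2,6\}$ for $p=7$ and $\{1,2,6,42\}$ for $p=43$. For $\mathcal{M}_p^{(1)}\subseteq p\cdot\mathcal{N}_p$ I would first compute $\mathcal{Q}_p$ from its recursive definition; a short induction (the prime factors of $q-1$ are smaller than $q$, so the definition is well founded) shows $\mathcal{Q}_7=\{2,3\}$ and $\mathcal{Q}_{43}=\{2,3,7\}$, whence $\mathcal{N}_7=\{1,2,3\}$ and $\mathcal{N}_{43}=\{1,2,3,6,7,14,21\}$. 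Multiplying by $p$ gives finitely many candidates, which I would then filter through conditions ii) and iii) of Theorem \ref{TEOR:P}; this discards, e.g., $21=3\cdot7$ (since $2\nmid21$), leaving $\{7,14\}$ and $\{43,86,258\}$ respectively.

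The set $\mathcal{M}_p^{(2)}$ is the crux, and here I would work \emph{directly} from Theorem \ref{TEOR:P} rather than from a coarse bound of the shape $p^2\cdot\mathcal{M}_1$. The reason is that for $n=p^2q_1\cdots q_r$ the divisibility $q_i-1\mid n$ is taken modulo the whole of $n$, so the extra factor $p^2$ can itself supply a prime needed to satisfy it; this is exactly how $q=43$ manages to occur in $\mathcal{M}_7^{(2)}$, since $42=43-1$ divides $n$ precisely because of the factor $7$ coming from $7^2$. Concretely, I would argue that the set $P$ of prime divisors of such an $n$ is tightly constrained: condition iv) forces $p-1\mid n$, hence (being coprime to $p$) $p-1$ is square-free with all its prime factors in $P$; every prime $q<p$ in $P$ lies in $\{2,3,7,43\}$ by Lemma \ref{LEM:MP2}; and every prime $q>p$ in $P$ must satisfy the stringent requirement $q-1\mid n$. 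These constraints leave only finitely many admissible products $p^2q_1\cdots q_r$, each of which I would test against conditions ii) and iv), obtaining $\mathcal{M}_7^{(2)}=\{294,12642\}$ and $\mathcal{M}_{43}^{(2)}=\{77658\}$.

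The main obstacle is precisely this control of $\mathcal{M}_p^{(2)}$: one must be certain that no prime exceeding $p$, beyond those appearing in the answer, can enter, because the square factor $p^2$ loosens the divisibility conditions that so efficiently bound the other two pieces. Once the admissible prime factors are pinned down and the finitely many candidates verified, taking the union of the three pieces produces $\mathcal{M}_7=\{1,2,6,7,14,294,12642\}$ and $\mathcal{M}_{43}=\{1,2,6,42,43,86,258,77658\}$, as claimed.
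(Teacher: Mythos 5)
Your overall structure coincides with the paper's implicit route (Corollary \ref{COR:DESC}): split $\mathcal{M}_p$ into $\mathcal{M}_p^{(0)}\cup\mathcal{M}_p^{(1)}\cup\mathcal{M}_p^{(2)}$, verify membership of each listed integer via Theorem \ref{TEOR:P}, and handle the first two pieces through $\mathcal{M}_1$ and $\mathcal{Q}_p$. Your computations there are correct: $\mathcal{Q}_7=\{2,3\}$, $\mathcal{Q}_{43}=\{2,3,7\}$, and the filtered candidate lists give $\{7,14\}$ and $\{43,86,258\}$. Your refusal to use a bound of the shape $p^2\cdot\mathcal{M}_1$ for the third piece is also well founded, and in fact sharper than the paper: Proposition \ref{PROP:INC} as stated is contradicted by this very corollary, since $12642=7^2\cdot 258$ with $258\notin\mathcal{M}_1$ (the intended containment is $\mathcal{M}_p^{(2)}\subseteq p\cdot\mathcal{M}_1$, and even that needs one to exclude prime divisors $q$ of $n$ with $p^2\mid q-1$). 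So you correctly located the weak link in the published chain of reasoning.

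The genuine gap is that your treatment of $\mathcal{M}_p^{(2)}$ stops exactly where the work begins: the sentence ``these constraints leave only finitely many admissible products'' is the statement to be proved, and Lemma \ref{LEM:MP2} only controls primes $q<p$. For $p=7$ the closure computation does terminate: the only primes $q$ with $q-1$ dividing a number of the form $7^2\cdot(\textrm{square-free over }\{2,3,43\})$ are $2,3,43$ (one checks $1+d$ for all such $d$ and finds no new prime), so the only candidates with $6\mid n$ are $49\cdot 6$ and $49\cdot 258$, both of which pass. But for $p=43$ the analogous closure does \emph{not} close on $\{2,3,7\}$: the number $77659=1+2\cdot3\cdot7\cdot43^2$ is prime, and since every $n\in\mathcal{M}_{43}^{(2)}$ is divisible by $43^2\cdot 42=77658$, the divisibility $q-1\mid n$ admits $q=77659$ as a prime factor larger than $p$. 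Excluding it forces you to invoke the congruence $n/q+43\equiv 0\pmod{q}$ of Theorem \ref{TEOR:P} ii) --- for $n=77658\cdot 77659$ this gives $-1+43\not\equiv 0\pmod{77659}$ --- and then to argue that no candidate with further prime factors can work either, a recursion of the same nature as the one the paper's algorithm performs for $\mathcal{Q}_p$ but which neither your sketch nor the paper carries out for $\mathcal{M}_p^{(2)}$. Until that is done, the completeness of $\mathcal{M}_{43}^{(2)}=\{77658\}$ (and hence of the list for $\mathcal{M}_{43}$) is asserted rather than proved.
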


\section{Algorithm for computing $\mathcal{Q}_p$ and $\mathcal{M}_p$}

Although Theorem~\ref{TEOR:P} gives a complete characterization of the set $\mathcal{M}_p$ for a prime $p$, 
from a practical point of view, Corollary~\ref{COR:DESC} is more useful for effective computation of this set. 
In particular, Corollary~\ref{COR:DESC} implies that in order to compute $\mathcal{M}_p$, it is enough to compute the set of primes $\mathcal{Q}_p$. 
Below we propose Algorithm~\ref{Alg1} that in the case of finite $\mathcal{Q}_p$ constructs it in a finite number of steps.
Namely, for an input prime $p$, Algorithm~\ref{Alg1} constructs a nested sequence of sets $X_1[p]\subseteq X_2[p]\subseteq \dots$. 
If this sequence stabilizes, the algorithm returns the limiting set denoted $\mathfrak{X}[p]$, which equals $\mathcal{Q}_p\cup\{p\}$ as we show in Theorem~\ref{th:algstop} below.

In Algorithm~\ref{Alg1}, $\PrimeParts(S)$ is defined as the set of primes in the set
$$\AllParts(S) := \{1+t\ :\ t=\prod_{q \in T}q\ \text{for some}\ T \subseteq S,\ (p-1)\nmid t\}.$$

\begin{algorithm}
\caption{Computing the set $\mathfrak{X}[p]$ for a given prime $p$.}\label{Alg1}
\begin{algorithmic}[1]
\State Let $X_1[p]:=\{2,p\}$
\For {$i=1,2,3,\dots$}
\State $X_{i+1}[p]:=X_i [p] \cup \PrimeParts(X_i[p])$
\If { $X_{i+1}[p] = X_i[p]$ }
\State \Return { $\mathfrak{X}[p]:=X_i[p]$ }
\EndIf
\EndFor
\end{algorithmic}
\end{algorithm}

\begin{teor}\label{th:algstop}
For every $i\geq 1$, we have that $X_i[p] \subseteq \mathcal{Q}_p\cup\{p\}$. 
Moreover, Algorithm~\ref{Alg1} stops if and only if $\mathcal{Q}_p$ is finite, in which case $\mathfrak{X}[p]=\mathcal{Q}_p\cup\{p\}$.
\end{teor}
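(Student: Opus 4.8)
The plan is to prove the two inclusions separately and then read off the halting behaviour. Before starting, I would record three elementary facts about the algorithm: the sequence is non-decreasing, $X_i[p]\subseteq X_{i+1}[p]$, directly from STEP2; each $X_i[p]$ is a finite set of primes, by induction, since $X_1[p]=\{2,p\}$ and STEP2 adjoins only the (finitely many) primes extracted from the finite set $ProdParts(X_i[p])$; and, because $X_{i+1}[p]$ depends only on $X_i[p]$, the sequence is eventually constant as soon as $X_{i+1}[p]=X_i[p]$ holds once. Accordingly I would read the halting condition as ``STEP2 produces no new prime'', i.e. $X_{i+1}[p]=X_i[p]$; this is the genuine fixed point of the construction and is exactly the condition tested by Proposition~\ref{PROP:C2}.

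For the inclusion $X_i[p]\subseteq\mathcal{Q}_p\cup\{p\}$ I would induct on $i$. The base case $X_1[p]=\{2,p\}$ is immediate: $p\in\{p\}$, while $2\in\mathcal{Q}_p$ when $p\neq 2$ (its predecessor $1$ is square-free, has no prime divisors, and is not divisible by $p-1$), and $2=p$ when $p=2$. For the inductive step, any element of $X_{i+1}[p]\setminus X_i[p]$ is a prime $q\in ProdParts(X_i[p])$, so $q-1=\prod_{q'\in\Omega}q'$ for some $\Omega\subseteq X_i[p]$ with $p-1\nmid q-1$. Since by hypothesis the members of $\Omega$ are distinct primes lying in $\mathcal{Q}_p\cup\{p\}$, the number $q-1$ is square-free, $p-1\nmid q-1$ is the filter built into $ProdParts$, and every prime divisor of $q-1$ is a member of $\Omega$, hence equals $p$ or lies in $\mathcal{Q}_p$. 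These are precisely conditions (i)--(iii) in the definition of $\mathcal{Q}_p$, so $q\in\mathcal{Q}_p$.

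The substantive step is the reverse inclusion, which I would package as the following completeness lemma: \emph{if the sequence reaches a fixed point $X_{i+1}[p]=X_i[p]$, then $X_i[p]=\mathcal{Q}_p\cup\{p\}$.} This I would prove by a minimal-counterexample argument on the magnitude of the elements of $\mathcal{Q}_p$. Suppose some element of $\mathcal{Q}_p\cup\{p\}$ is missing from $X_i[p]$ and let $q$ be the smallest such. Then $q\notin\{2,p\}$, since both lie in $X_1[p]\subseteq X_i[p]$, so $q\in\mathcal{Q}_p$ with $q>2$. Writing $q-1$ as a product of distinct primes $t_1,\dots,t_m$, condition (iii) gives each $t_j\in\mathcal{Q}_p\cup\{p\}$, and each $t_j\le q-1<q$; minimality of $q$ then forces $t_1,\dots,t_m\in X_i[p]$. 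Hence $\Omega=\{t_1,\dots,t_m\}\subseteq X_i[p]$ realizes $q=1+\prod_{t\in\Omega}t$, and condition (ii), namely $p-1\nmid q-1$, guarantees that $q$ survives the filter, so $q\in ProdParts(X_i[p])$. As $q$ is prime, STEP2 would place $q$ in $X_{i+1}[p]=X_i[p]$, contradicting $q\notin X_i[p]$. The engine here is the observation that every prime divisor of $q-1$ is strictly smaller than $q$, which makes the recursion in the definition of $\mathcal{Q}_p$ well-founded and lets the subset-product construction of the algorithm exactly reproduce that defining recursion; matching condition (ii) to the $p-1\nmid n-1$ filter is the other point to verify with care.

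With both inclusions in hand the halting statement follows formally. If $\mathcal{Q}_p$ is finite, then by the first inclusion the non-decreasing sequence $X_i[p]$ lives inside the finite set $\mathcal{Q}_p\cup\{p\}$, so it stabilizes, the algorithm halts, and the completeness lemma identifies $\mathfrak{X}[p]=\mathcal{Q}_p\cup\{p\}$. Conversely, if the algorithm halts it has reached a fixed point, whence the completeness lemma gives $\mathfrak{X}[p]=X_i[p]=\mathcal{Q}_p\cup\{p\}$, a finite set, so $\mathcal{Q}_p$ is finite; equivalently, if $\mathcal{Q}_p$ is infinite no fixed point can occur and the strictly increasing sequence never halts. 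I expect the main obstacle to be the completeness lemma --- in particular setting up the well-founded induction cleanly and confirming that the filter in $ProdParts$ corresponds exactly to condition (ii) --- together with the bookkeeping needed to reconcile the literal set-difference halting test with the fact that STEP2 only ever adjoins primes.
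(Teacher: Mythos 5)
Your proposal is correct and follows essentially the same route as the paper: induction on $i$ for the inclusion $X_i[p]\subseteq\mathcal{Q}_p\cup\{p\}$ (you verify the three defining conditions directly where the paper argues by minimal counterexample, but the content is identical), and a minimal-counterexample argument on $(\mathcal{Q}_p\cup\{p\})\setminus\mathfrak{X}[p]$ for completeness at a fixed point, from which the finiteness equivalence follows. Your reading of the halting test as the fixed-point condition $X_{i+1}[p]=X_i[p]$ is the sensible interpretation and matches how the paper actually uses the algorithm in its examples.
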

\begin{proof} 
Let $\mathcal{Q'}_p=\mathcal{Q}_p\cup\{p\}$.
Clearly, $X_1[p] \subseteq \mathcal{Q'}_p$. 
Let us assume that there exists an index $i\geq 2$ such that $X_{i-1}[p] \subseteq \mathcal{Q'}_p$, but $X_i[p] \nsubseteq\mathcal{Q'}_p$. Consider the minimum element $q$ in $X_i[p] \setminus \mathcal{Q'}_p$. 
Since $q$ does not belong to $\mathcal{Q'}_p$, but $q-1$ is squarefree and $(p-1)\nmid (q-1)$, 
there exists a prime factor $q_1$ of $q-1$ that is not in $\mathcal{Q'}_p$ and thus not in $X_{i-1}[p]$ either. 
This contradicts the fact that every element of $X_i[p]\setminus\{2,p\}$ is of the form $1+p_1\cdots p_k$ with $p_j \in X_{i-1}[p]$. 
Hence, $X_i[p] \subseteq \mathcal{Q'}_p$ for every $i\geq 1$ as claimed.

Now, Algorithm~\ref{Alg1} constructs sets $X_1[p]\subseteq X_2[p]\subseteq \dots$, which are all subsets $\mathcal{Q'}_p$.
So, if $\mathcal{Q}_p$ is finite (and so is $\mathcal{Q'}_p$), the algorithm stops and returns the limiting set $\mathfrak{X}[p]$. 
Let us show that $\mathfrak{X}[p]=\mathcal{Q'}_p$. If $\mathfrak{X}[p]\subsetneq\mathcal{Q'}_p$, consider the minimum element $q$ in $\mathcal{Q'}_p \setminus \mathfrak{X}[p]$. 
Then $q-1=q_1\cdots q_r$ is squarefree, where $q_i \in  \mathcal{Q'}_p$. 
Since each $q_i<q$, the definition of $q$ implies that $q_i \in \mathfrak{X}[p]$. Hence, $q=1+q_1\cdots q_r \in \mathfrak{X}[p]$, since otherwise Algorithm~\ref{Alg1} would have not stopped. 
This contradicts the assumption of $\mathcal{Q'}_p \setminus \mathfrak{X}[p]$ being nonempty, and thus completes the proof.
\end{proof}

Once the set $\mathcal{Q}_p$ is obtained, one can easily compute $\mathcal{N}_p = \AllParts(\mathcal{Q}_p)$, and then use Corollary~\ref{COR:DESC} to find $\mathcal{M}_p$. Some interesting examples computed with Algorithm~\ref{Alg1} are given in the following table.

\begin{center}
\begin{footnotesize}
\begin{tabular}{|c|c|c|c|}
  \hline
  $p$ & stop & $\mathcal{Q}_p$ & $\mathcal{M}_p $ \\ \hline 
  $19$ & $i=8$ & $\{ 2 ,3,7,43,4903,168241543, 5773040306503\}$ & $\{ 1,2,6,19,38,114,798,34314\}$ \\ \hline 
  $79$ & $i=5$ & $\{2, 3, 7, 43, 3319, 1573207\}$ & $\{1, 2, 6, 79, 158, 474, 3318, 142674\}$ \\\hline 
  $193$ & $i=5$ & $\{2, 3, 7, 43, 348559\}$ & $\{1, 2, 6, 193, 386, 1158, 8106, 348558\}$ \\
  \hline
\end{tabular}
\end{footnotesize}
\end{center}

\

The following result establishes the finiteness of $\mathcal{Q}_p$ (and hence of $\mathcal{M}_p$) for a family of primes.

\begin{teor}\label{th:COND}
Let $A:=\{ 2, 6, 14, 42, 86, 258, 602, 1806 \}$ be the set of even divisors of $1806=2\cdot 3\cdot 7\cdot 43$. If a prime $p$ is such that the set  
$$\{1+\alpha p\ :\ \alpha\in A\}$$
does not contain any prime, then $\mathcal{M}_p \subseteq \mathcal{M}_1 \cup p \mathcal{M}_1$, and thus $\mathcal{M}_p$ is finite.
\end{teor}

\begin{proof} 
It is easy to see that primes $p\in \{2,3,7,43\}$ do not satisfy the condition as numbers $1+2\cdot 2$, $1+2\cdot 3$, $1+6\cdot 7$, and $1+1806\cdot 43$ are prime. On the other hand, for a prime $p\notin \{2,3,7,43\}$ satisfying the theorem condition, 
Algorithm~\ref{Alg1} returns $\mathfrak{X}[p] = \{2,3,7,43,p\}$, implying that $\mathcal{Q}_p=\{2,3,7,43\}$. Then the theorem statement follows from \eqref{eq:Np} and Corollary~\ref{COR:DESC}.
\end{proof}

There seem to exist many primes $p$ satisfying the condition of Theorem~\ref{th:COND} (sequence \texttt{A302345} in the OEIS~\cite{OEIS}). 
For example, the only such primes below $1000$ are
$$67, 97, 127, 163, 307, 317, 337, 349, 409, 521, 523, 547, 643, 709, 757, 811, 839, 857, 919, 967, 997.$$

We remark that there also exist primes $p$, for which $\mathcal{Q}_p$ and $\mathcal{M}_p$ are finite but do not satisfy the condition of Theorem~\ref{th:COND}. In particular, this holds for $p\in\{19, 79, 193\}$ present in the table above.

Unfortunately, in some cases we cannot determine if Algorithm~\ref{Alg1} stops due to the size of the involved sets of primes. 
For instance, for $p=5$, the set $X_5[p]$ contains 77 primes, and it seems infeasible to compute $X_6[p]$.

\section{Connection between $\mathcal{M}_p$ and primary pseudoperfect numbers}

When $\mathcal{Q}_p$ is infinite, Algorithm~\ref{Alg1} never stops. 
Nevertheless, there is an easy result that allows us to compute the elements of $\mathcal{M}_p$ below $p\cdot (8.49\times 10^{30})$ as explained below.

We recall that an integer $n\ge1$ is a \emph{weak primary pseudoperfect number}~\cite{GOS} if it satisfies the congruence:
$$\sum_{p\mid n}\frac{n}{p}+1\equiv 0\pmod{n}.$$
Let $\mathcal{W}$ be the set of all weak primary pseudoperfect numbers (sequence \texttt{A230311} in the OEIS~\cite{OEIS}).
The only known elements of $\mathcal{W}$ are
$$1,2,6,42,1806,47058, 2214502422, 52495396602, 8490421583559688410706771261086.$$
It is not even known if $\mathcal{W}$ is finite.

\begin{cor}\label{PROP:MpW}
Let $p$ be a prime. Then $\mathcal{M}_{p} \subseteq \mathcal{M}_1\cup p\cdot \mathcal{W}$.
\end{cor}

\begin{proof}
Let $n\in\mathcal{M}_p$. 
If $p\nmid n$, then $n\in\mathcal{M}_1$ by Lemma~\ref{PROP:M0}. 
On the other hand, if $p\mid n$, \cite[Corollary 1]{GOS} states that $\nicefrac{n}{p}\in\mathcal{W}$.
\end{proof}

Corollary~\ref{PROP:MpW} enables computing all the elements of $\mathcal{M}_p$ below $p\cdot \max\mathcal{W}$. 
It is enough to determine computationally if $S_n(n)\equiv p\pmod{n}$ for every element of
$\mathcal{M}_1\cup p\cdot \mathcal{W}$, which currently has up to $14$ known elements.

In some cases, it is possible to use \emph{ad hoc} arguments to prove that $\mathcal{M}_p$ is finite and, hence, to compute its elements. 
This is the case, e.g., for $p=2,3$. To see that both $\mathcal{M}_{2}$ and $\mathcal{M}_3$ are finite, we need to recall some ideas from \cite{GOS}. 
For every $Q\in\mathbb{N}$, we define 
$$\mathfrak{M}_Q:=\{n\in\mathbb{N}:S_{nQ}(nQ)\equiv n\pmod{nQ}\}.$$
If $\mathfrak{M}_Q\neq\emptyset$, then $Q\in\mathcal{W}$ (\cite[Corollary 1]{GOS}), and furthermore we have the following statement.

\begin{teor}[{\cite[Proposition 3]{GOS}}]
\label{PROP:NQ}
For a given weak primary pseudoperfect number $Q$, define the integer
$$\mathfrak{n}_Q:=\begin{cases}{\rm lcm}\left\{\frac{p-1}{\gcd(p-1,Q)}\ :\ \text{prime}\ p\mid Q\right\}, & \textrm{if $Q\neq 1$};\\ 1, & \textrm{if $Q=1$}.\end{cases}$$
Then $\mathfrak{M}_Q=\emptyset$ if and only if $(q-1)\mid \mathfrak{n}_QQ$ for some prime $q\mid \mathfrak{n}_Q$. 
Moreover, if $\mathfrak{M}_Q\neq\emptyset$, then $\mathfrak{n}_Q\mid n$ for every $n\in\mathfrak{M}_Q$ and, in particular, $\mathfrak{n}_Q=\min\mathfrak{M}_Q$.
\end{teor}

The following lemma is straightforward.

\begin{lem}\label{LEM:CAR}
Let $p$ be a prime. Then $n\in\mathcal{M}_p^{(1)}\cup\mathcal{M}_p^{(2)}$ if and only if $\nicefrac{n}{p}\in\mathcal{W}$ and $p\in\mathfrak{M}_{\nicefrac{n}p}$.
\end{lem}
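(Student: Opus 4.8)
The plan is to first reduce the two-sided statement to a single congruence and then peel it apart modulo $p$ and modulo the remaining part of $n$. By Theorem \ref{TEOR:P} i) the exponent of $p$ in any element of $\mathcal{M}_p$ is at most $2$, so $\mathcal{M}_p^{(1)}\cup\mathcal{M}_p^{(2)}$ is exactly $\{n\in\mathcal{M}_p:p\mid n\}$. Writing $Q=n/p$, so that $n=pQ$ and $S_n(n)=S_{pQ}(pQ)$, the claim becomes: for $n$ divisible by $p$, the congruence $S_{pQ}(pQ)\equiv p\pmod{pQ}$ holds if and only if $Q\in\mathcal{W}$ and $p\in\mathfrak{M}_Q$. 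Note that, by the very definition of $\mathfrak{M}_Q$ (with the dummy variable set equal to $p$), the condition $p\in\mathfrak{M}_Q$ is precisely $S_{pQ}(pQ)\equiv p\pmod{p}$, i.e. the reduction of the target congruence modulo $p$.

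For the forward implication I would start from $n\in\mathcal{M}_p$ with $p\mid n$, so $S_{pQ}(pQ)\equiv p\pmod{pQ}$. Reducing this modulo $p$ (legitimate since $p\mid pQ$) yields $S_{pQ}(pQ)\equiv p\pmod{p}$, which is exactly $p\in\mathfrak{M}_Q$; in particular $\mathfrak{M}_Q\neq\emptyset$, whence $Q=n/p\in\mathcal{W}$ by \cite[Corollary 1]{GOS}. This reproduces, and slightly refines, the step already used in the proposition preceding this lemma.

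For the converse I would assume $Q\in\mathcal{W}$ and $p\in\mathfrak{M}_Q$ and reconstruct the full congruence $S_{pQ}(pQ)\equiv p\pmod{pQ}$. The hypothesis $p\in\mathfrak{M}_Q$ already supplies the congruence modulo $p$; what remains is to recover it modulo each prime power dividing $Q$ (and modulo $p^2$ when $p\mid Q$). The plan is to obtain this remaining part from the defining congruence of $\mathcal{W}$ together with Lemma \ref{LEM:GMO} and the correspondence of \cite[Corollary 1]{GOS} (equivalently Proposition \ref{PROP:NQ}), exactly as in \cite{GOS}. The hard part will be the case $n\in\mathcal{M}_p^{(2)}$, where $p\mid Q$: here $p$ and $Q$ are not coprime, so one cannot simply combine the mod-$p$ and mod-$Q$ information by the Chinese Remainder Theorem, and the mod-$p^2$ content of the congruence is not visible in $p\in\mathfrak{M}_Q$ alone. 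I expect to settle this by invoking the $\mathcal{W}$/$\mathfrak{M}_Q$ machinery of \cite{GOS} directly, which packages precisely this reconstruction; once the full congruence modulo $pQ$ is in hand, $n=pQ$ lies in $\mathcal{M}_p$ with $p\mid n$, hence in $\mathcal{M}_p^{(1)}\cup\mathcal{M}_p^{(2)}$, completing the equivalence.
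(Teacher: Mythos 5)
There is a genuine gap, and it originates in how you read the definition of $\mathfrak{M}_Q$. The paper's displayed definition, $\mathfrak{M}_Q=\{n: S_{Qn}(Qn)\equiv n\pmod{n}\}$, contains a typo: the modulus must be $Qn$, not $n$. This is forced by the source \cite{GOS} (whose subject is $S_m(m)\equiv n\pmod{m}$ with $n\mid m$, i.e.\ $m=Qn$) and by the surrounding text: under the literal mod-$n$ reading, $1\in\mathfrak{M}_Q$ for every $Q$, so the paper's assertion ``$\mathfrak{M}_Q\neq\emptyset\Rightarrow Q\in\mathcal{W}$'' would make every integer weak primary pseudoperfect. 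With the intended definition, $p\in\mathfrak{M}_{n/p}$ \emph{is} the full congruence $S_n(n)\equiv p\pmod{n}$, not merely its reduction mod $p$, and the lemma collapses to the two-line observation the paper has in mind: $\mathcal{M}_p^{(1)}\cup\mathcal{M}_p^{(2)}=\{n\in\mathcal{M}_p: p\mid n\}$ (Theorem \ref{TEOR:P} i) caps $v_p(n)$ at $2$), membership there is by definition $p\mid n$ together with $S_n(n)\equiv p\pmod n$, i.e.\ $p\in\mathfrak{M}_{n/p}$, and $n/p\in\mathcal{W}$ is then automatic from \cite[Corollary 1]{GOS}. No mod-$p$/mod-$Q$ splitting or Chinese Remainder argument is needed in either direction.

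Under your reading, the converse you sketch is not just unproved but false, so no amount of ``$\mathcal{W}$/$\mathfrak{M}_Q$ machinery'' can close it. Concretely, take $p=11$ and $Q=47058=2\cdot3\cdot11\cdot23\cdot31\in\mathcal{W}$. Wait --- $11\nmid 47058$; the point is that $10=p-1$ does not divide $pQ$ (since $5\nmid pQ$), so by Lemma \ref{LEM:GMO} ii) $S_{pQ}(pQ)\equiv 0\pmod{11}$, i.e.\ your version of ``$p\in\mathfrak{M}_Q$'' holds and $Q\in\mathcal{W}$; yet $pQ\notin\mathcal{M}_{11}$, because the prime $q=11$ of $Q$ violates Theorem \ref{TEOR:P} ii): $q-1=10\nmid pQ$. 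This shows that the mod-$q_i$ conditions of Theorem \ref{TEOR:P} are not recoverable from $Q\in\mathcal{W}$ plus the mod-$p$ congruence alone, which is exactly the step your proof defers to unspecified machinery. Your forward direction survives only by accident (the full congruence implies your weaker condition), and even there the appeal to \cite[Corollary 1]{GOS} needs $\mathfrak{M}_Q$ in its intended sense. The repair is simply to use the correct definition, after which your elaborate decomposition is unnecessary.
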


While it seems to be plausible that the set $\mathcal{M}_p$ is finite for every prime $p$, 
there are many primes $p$ for which Algorithm~\ref{Alg1} fails to prove its finiteness. 
Nevertheless, in the previous setting, we can directly prove the finiteness of $\mathcal{M}_p$ for $p=2$ and $3$.

\begin{cor}\label{PROP:23}
If $p\in\{2,3\}$, then $\mathcal{M}_p$ is finite.
\end{cor}
\begin{proof}
By Lemma~\ref{PROP:M0} and Corollary~\ref{COR:DESC}, 
it is enough to show that $\mathcal{M}_p^{(1)}\cup\mathcal{M}_p^{(2)}$ is finite. 
Let us assume that $n\in \mathcal{M}_p^{(1)}\cup\mathcal{M}_p^{(2)}$, and observe that $\nicefrac{n}{p}\in\mathcal{W}$ and $p\in \mathfrak{M}_{\nicefrac{n}p}$ by Lemma~\ref{LEM:CAR}.

Let $p=2$ with $\nicefrac{n}2\in\mathcal{W}$ and $2\in\mathfrak{M}_{\nicefrac{n}2}$. 
Then Theorem~\ref{PROP:NQ} implies that $\mathfrak{n}_{\nicefrac{n}p}\mid 2$, i.e., $\mathfrak{n}_{\nicefrac{n}2}=1$ or $2$. 
Now, if $\mathfrak{n}_{\nicefrac{n}2}=2$, Theorem~\ref{PROP:NQ} implies that $\mathfrak{M}_{\nicefrac{n}2}=\emptyset$, a contradiction. 
Hence, $\mathfrak{n}_{\nicefrac{n}2}=1$, which implies that $(p-1)\mid \nicefrac{n}2$ for every $p\mid \nicefrac{n}2$, i.e., that $\nicefrac{n}2\in \mathcal{M}_1$ by Lemma~\ref{LEM:N}. 
Consequently, $\mathcal{M}_p^{(1)}\cup\mathcal{M}_p^{(2)}\subseteq 2\cdot\mathcal{M}_1$ is finite and so is $\mathcal{M}_2\subseteq \mathcal{M}_1\cup 2\cdot\mathcal{M}_1$.

Now, let $p=3$ with $\nicefrac{n}3\in\mathcal{W}$ and $3\in\mathfrak{M}_{\nicefrac{n}3}$. 
Again, we obtain that $\mathfrak{n}_{\nicefrac{n}3}=1$ or $3$. Since $n\in\mathcal{M}_p$ and $3\mid n$, if $n\neq 3$, Theorem~\ref{TEOR:P} implies that $(q-1)\mid n$ for every prime $q\mid \nicefrac{n}3$. 
In particular, $2\mid n$ and thus $2\mid \nicefrac{n}3$, so Theorem~\ref{PROP:NQ} implies that $\mathfrak{M}_{\nicefrac{n}3}=\emptyset$, which is a contradiction. 
Hence, $\mathfrak{n}_{\nicefrac{n}3}=1$ and $\mathcal{M}_3\subseteq \mathcal{M}_1\cup 3\cdot\mathcal{M}_1$ is finite.
\end{proof}

As a consequence, it is easy to compute the elements of $\mathcal{M}_p$ for $p=2,3$.

\begin{cor}\label{COR:23}
\begin{align*}
\mathcal{M}_2&=\{1,4,12,84,3612\},\\ \mathcal{M}_{3}&=\{1,2,3,18,126,5418\}.
\end{align*}
\end{cor}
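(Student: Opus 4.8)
The goal is to compute $\mathcal{M}_2$ and $\mathcal{M}_3$ explicitly, having already established in Proposition~\ref{PROP:23} that both $\mathcal{M}_2\subseteq\mathcal{M}_1\cup 2\cdot\mathcal{M}_1$ and $\mathcal{M}_3\subseteq\mathcal{M}_1\cup 3\cdot\mathcal{M}_1$. Since $\mathcal{M}_1=\{1,2,6,42,1806\}$ is explicit, the plan is to enumerate the candidate sets and test each candidate against the characterization in Theorem~\ref{TEOR:P} (equivalently, against the original congruence $S_n(n)\equiv p\pmod n$).

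First I would handle $p=2$. The candidates are $\mathcal{M}_1=\{1,2,6,42,1806\}$ together with $2\cdot\mathcal{M}_1=\{2,4,12,84,3612\}$. By Proposition~\ref{PROP:M0}, an element $n$ of $\mathcal{M}_1$ lies in $\mathcal{M}_2^{(0)}$ iff $2\equiv 1\pmod n$, i.e. $n\mid 1$, so only $n=1$ survives from $\mathcal{M}_1$; this explains why $2,6,42,1806$ are absent. For the set $2\cdot\mathcal{M}_1$, each $n=2m$ with $m\in\mathcal{M}_1$ must be checked against conditions~(i)--(iii) of Theorem~\ref{TEOR:P} with $p=2$, $s=1$: one needs $p-1=1\nmid n$, which is never true, so condition~(iii) appears to fail for all of them. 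The subtlety here is that $p=2$ is the even-prime case where the roles of the factor $2^t$ and $p^s$ coincide, so one must instead verify the congruence directly (or reread the $p=2$ branch of Theorem~\ref{TEOR:P}); a direct computation of $S_n(n)\bmod n$ for $n\in\{4,12,84,3612\}$ confirms each gives residue $2$, so all four belong to $\mathcal{M}_2$, yielding $\mathcal{M}_2=\{1,4,12,84,3612\}$.

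Next I would treat $p=3$ analogously. Among $\mathcal{M}_1=\{1,2,6,42,1806\}$, the elements of $\mathcal{M}_3^{(0)}$ are those with $3\equiv 1\pmod n$, i.e. $n\mid 2$, giving $n\in\{1,2\}$; this accounts for $1$ and $2$ appearing while $6,42,1806$ drop out. For $3\cdot\mathcal{M}_1=\{3,6,18,126,5418\}$, I would run each candidate through Theorem~\ref{TEOR:P} with $p=3$: the value $n=3$ has $s=1$ and needs $p-1=2\nmid n$, which holds, so $3\in\mathcal{M}_3$; the values $18=2\cdot 3^2$, $126=2\cdot 3^2\cdot 7$, and $5418=2\cdot 3^2\cdot 7\cdot 43$ have $s=2$, so condition~(iv) requires $p-1=2\mid n$ and $n/p^2+1\equiv 0\pmod 3$, both of which I expect to verify directly. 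The candidate $n=6=2\cdot 3$ lies in the $s=1$ case but has $p-1=2\mid 6$, violating condition~(iii), so it is excluded. This produces $\mathcal{M}_3=\{1,2,3,18,126,5418\}$.

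The main obstacle, and the only place care is genuinely required, is the $p=2$ computation: because Theorem~\ref{TEOR:P} was proved with the explicit caveat that ``if $p=2$ the proof is identical'' but the stated conditions are phrased for odd $p$, the cleanest and least error-prone route is to bypass the prime-power conditions for $p=2$ and instead evaluate $S_n(n)\bmod n$ directly on the short candidate list, using Lemma~\ref{LEM:GMO} to reduce each power sum modulo the relevant prime-power factors. For $p=3$ the characterization applies transparently, and the finite candidate list makes every verification a routine finite check; I therefore expect the whole corollary to reduce to a short, explicit case analysis with no new ideas beyond those already assembled.
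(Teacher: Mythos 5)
Correct, and essentially the paper's own route: the paper gives no proof beyond presenting the corollary as a consequence of Proposition~\ref{PROP:23}, i.e.\ a finite verification over the candidate lists $\mathcal{M}_1\cup 2\cdot\mathcal{M}_1$ and $\mathcal{M}_1\cup 3\cdot\mathcal{M}_1$, which is exactly what you do. Two small points worth fixing: the candidates $4,12,84,3612\in 2\cdot\mathcal{M}_1$ satisfy $2^2\,\|\,n$, so they fall under $s=2$ and condition~(iv) of Theorem~\ref{TEOR:P} (not $s=1$ and condition~(iii) as you assert --- the characterization does apply cleanly to $p=2$, though your fallback to direct evaluation of $S_n(n)\bmod n$ is also valid); and the remaining candidate $n=2\in 2\cdot\mathcal{M}_1$ must still be checked and discarded explicitly ($S_2(2)=5\equiv 1\not\equiv 0\pmod 2$), since ruling $2$ out of $\mathcal{M}_2^{(0)}$ via Proposition~\ref{PROP:M0} does not rule it out of $\mathcal{M}_2^{(1)}$.
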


In Lemma~\ref{PROP:743} and Corollary~\ref{COR:23}, we have established the finiteness and computed the elements of $\mathcal{M}_p$ for $p=2,3,7,43$. 
Recall that these are precisely the cases when $\mathcal{M}_p^{(2)}$ may be nonempty. 
In the remaining cases, $\mathcal{M}_p=\mathcal{M}_p^{(0)}\cup \mathcal{M}_p^{(1)}$. We will conclude this section with a characterization of $\mathcal{M}_p^{(1)}$ for $p\neq 2,3,7,43$.

\begin{lem}
Let $p\neq 2,3,7,43$ be a prime. Then $p\cdot \mathcal{M}_1=\{p,2p,6p,42p,1806p\}\subset\mathcal{M}_p^{(1)}$.
\end{lem}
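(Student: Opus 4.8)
The plan is to show that for each $\nu\in\mathcal{M}_1=\{1,2,6,42,1806\}$ the integer $n=p\nu$ satisfies the four conditions of Theorem~\ref{TEOR:P} with $s=1$, and then to note that $p$ divides such an $n$ exactly once, so that $n\in\mathcal{M}_p^{(1)}$. Since the five products $p,2p,6p,42p,1806p$ are pairwise distinct, this yields the claimed inclusion.

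First I would record the three properties of the elements of $\mathcal{M}_1$ that the verification needs. Each $\nu\in\mathcal{M}_1$ is square-free and satisfies $q-1\mid\nu$ for every prime $q\mid\nu$; these are exactly the defining conditions of the set $\mathcal{N}$ in Lemma~\ref{LEM:N} and are immediate from the list. In particular every prime factor of $\nu$ divides $1806=2\cdot3\cdot7\cdot43$, so all prime factors of $\nu$ lie in $\{2,3,7,43\}$. The third property I need is $\nu/q\equiv-1\pmod q$ for every prime $q\mid\nu$; this follows because each $\nu\in\mathcal{M}_1$ is a weak primary pseudoperfect number, so $\sum_{q\mid\nu}\nu/q+1\equiv0\pmod\nu$, and reducing this congruence modulo a fixed prime $q_0\mid\nu$ kills every term $\nu/q$ with $q\neq q_0$ (as $q_0\mid\nu/q$), leaving $\nu/q_0+1\equiv0\pmod{q_0}$.

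With $n=p\nu$ I would then run through Theorem~\ref{TEOR:P}. Because the prime factors of $\nu$ lie in $\{2,3,7,43\}$ while $p\notin\{2,3,7,43\}$, we have $p\nmid\nu$, hence $n=p^1 q_1\cdots q_r$ with the $q_i$ the distinct (square-free) prime factors of $\nu$; this gives condition i) with $s=1$ and simultaneously shows $p\mid\mid n$. For condition ii) the divisibility $q_i-1\mid\nu\mid n$ handles the first requirement, and the identity $n/q_i+p=p(\nu/q_i+1)$ together with $\nu/q_i\equiv-1\pmod{q_i}$ gives $n/q_i+p\equiv0\pmod{q_i}$. Condition iv) is vacuous since $s=1$.

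The only step carrying genuine content, and the one that uses the hypothesis $p\neq2,3,7,43$, is condition iii), namely $p-1\nmid n$. Here I would use $\gcd(p-1,p)=1$ to reduce $p-1\mid n=p\nu$ to $p-1\mid\nu$, and then $\nu\mid1806$ to reduce further to $p-1\mid1806$. The argument finishes by enumerating the divisors of $1806$ and checking which are of the form $p-1$ with $p$ prime: only $1,2,6,42$ qualify, giving $p\in\{2,3,7,43\}$. Hence for $p\neq2,3,7,43$ we have $p-1\nmid\nu$, so iii) holds. This finite divisor check is the place I expect to be most careful, since it is exactly what singles out the four exceptional primes and makes the inclusion hold precisely when $p\notin\{2,3,7,43\}$.
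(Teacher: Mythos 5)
Your proposal is correct and follows exactly the route the paper intends: the paper's own proof is the one-line remark ``apply Theorem~\ref{TEOR:P} and recall the definition of $\mathcal{M}_p^{(1)}$,'' and you simply carry out that verification in full, including the key point that $p-1\mid 1806$ forces $p\in\{2,3,7,43\}$. All the individual checks (square-freeness, $q-1\mid\nu$, $\nu/q\equiv-1\pmod q$ via the pseudoperfect congruence, and the divisor enumeration for condition iii)) are accurate.
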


\begin{proof}
The statement directly follows from Theorem~\ref{TEOR:P} and the definition of $\mathcal{M}_p^{(1)}$.
\end{proof}

\begin{cor}\label{PROP:NP}
Let $p\neq 2,3,7,43$ be a prime. Then $n\in\mathcal{M}_p^{(1)}$ if and only if $\nicefrac{n}p\in\mathcal{W}$, $\mathfrak{n}_{\nicefrac{n}p}\mid p$, and $(\mathfrak{n}_{\nicefrac{n}p}-1)\nmid \nicefrac{n}p$.
\end{cor}

\begin{proof}
Assume that $n\in\mathcal{M}_p^{(1)}$. By Lemma~\ref{LEM:CAR}, $\nicefrac{n}p\in\mathcal{W}$ and $p\in\mathfrak{M}_{\nicefrac{n}p}$. 
Hence, $\mathfrak{M}_{\nicefrac{n}p}\neq\emptyset$. By Theorem~\ref{PROP:NQ}, $\mathfrak{n}_{\nicefrac{n}p}\mid p$ and $(\mathfrak{n}_{\nicefrac{n}p}-1)\nmid \nicefrac{n}p$.

Conversely, assume that $\nicefrac{n}p\in\mathcal{W}$, $\mathfrak{n}_{\nicefrac{n}p}\mid p$, and $(\mathfrak{n}_{\nicefrac{n}p}-1)\nmid \nicefrac{n}p$. 
If $\mathfrak{n}_{\nicefrac{n}p}=1$, then similarly to the second part of the proof of Corollary \ref{PROP:23}, 
we obtain that $n\in p\cdot\mathcal{M}_1\subset\mathcal{M}_p^{(1)}$. 
On the other hand, if $\mathfrak{n}_{\nicefrac{n}p}=p$, then $\mathfrak{n}_{\nicefrac{n}p}-1=p-1\nmid \nicefrac{n}p$, and Theorem~\ref{PROP:NQ} implies that $p\in\mathfrak{M}_{\nicefrac{n}p}$. 
Then application of Lemma~\ref{LEM:CAR} completes the proof.
\end{proof}

Corollary~\ref{PROP:NP} enables computing (with little effort) all the elements of $\mathcal{M}_p$ below the product of $p$ and the largest known weak primary pseudoperfect number, 
which today gives the bound $p \cdot  8.49 \times 10^{30}$. 
It just remains to check if $S_n(n)\equiv p\pmod{n}$ for every element of $p \mathcal{W} \cup \mathcal{M}_{1}$. 
Implementing this idea, we obtain the following result.

\begin{cor} 
For every prime $p\neq 5$, we have that
$$ [1,p \cdot  8.49 \times 10^{30}] \cap \mathcal{M}_p \subseteq  \mathcal{M}_1   \cup p  \mathcal{M}_1.$$
\end{cor}

The prime $p=5$ is exceptional, since it is the only known prime $p$ for which there exist weak primary pseudoperfect numbers $Q$ satisfying $\mathfrak{n}_Q=p$. 
Namely, we have $\mathfrak{n}_{47058}=\mathfrak{n}_{2214502422}=5$. For prime $p=5$, we obtain the following result:

\begin{cor} 
$$\mathcal{M}_5 \cap [1,10^{31}] =\{1, 2, 5, 10, 30, 210, 9030, 235290, 11072512110\}.$$
\end{cor}

So, unless new weak primary pseudoperfect numbers are found, it is impossible to find more than 10 solutions to the congruence $S_n(n)\equiv p \pmod{n}$ with prime $p$. 
In other words, for a prime $p\neq 5$, finding a solution not from the set $\mathcal{M}_1 \cup p \mathcal{M}_1$ is equivalent to finding a new weak primary pseudoperfect number.

\section{Further work}

A natural extension of this work is, of course, to have a closer look at $\mathcal{M}_m$ with composite $m$. In this general case, we have the following analogue of Theorem~\ref{TEOR:P}.

\begin{teor}\label{TEOR:G}
Let $m=p_1^{r_1}\cdots p_s^{r_s}$ be an integer, where $p_1,\dots,p_s$ are pairwise distinct primes, $r_1,\dots,r_s$ are positive integers.
A positive integer $n$ belongs to $\mathcal{M}_m$ if and only if the following conditions hold:
\begin{enumerate}[(i)]
\item The prime power factorization of $n$ is given by $n=q_1\cdots q_r p_1^{t_1}\cdots p_s^{t_s}$, where 
$q_1,\dots,q_r$ are pairwise distinct primes not from $\{p_1,\dots,p_s\}$.
\item For every $j\in\{1,\dots,r\}$, $(q_j-1)\mid n$ and $\nicefrac{n}{q_j}+m\equiv 0\pmod{q_j}$.
\item For every $i\in\{1,\dots,s\}$, we have $t_i\in\{0,r_i,r_i+1\}$. Furthermore, if $t_i=r_i$, then $(p_i-1)\nmid n$; and if $t_i=r_i+1$, then $(p_i-1)\mid n$ and $\nicefrac{n}{p_i^{r_i+1}}+1\equiv 0\pmod{p_i}$.
\end{enumerate}
\end{teor}

\begin{proof}
Clearly, $n\in\mathcal{M}_m$ if and only if $S_n(n)\equiv m\pmod{q_j}$ for every $j\in\{1,\dots,r\}$ 
and $S_n(n)\equiv m\pmod{p_i^{t_i}}$ for very $i\in\{1,\dots,s\}$. 
It remains to apply Lemma~\ref{LEM:GMO} and argue just like in the proof of Theorem~\ref{TEOR:P}.
\end{proof}

Theorem~\ref{TEOR:G} enables construction of the set $\mathcal{M}_m$ for some particular values of $m$ as well as developing algorithms for computing 
the possible prime divisors of the elements of $\mathcal{M}_m$ (similarly to how we have done so in the prime case), but they are not operative. 
New ideas will have to be developed in order to attack this general situation. In any case, the following conjecture seems plausible. 

\begin{con} 
For every $m \in \mathbb{N}$ the set of solutions to the congruence $S_n(n)\equiv m \pmod{n}$ is finite.
\end{con}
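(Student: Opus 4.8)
The plan is to mimic, for a general modulus $m = p_1^{r_1}\cdots p_s^{r_s}$, the reduction that settled the prime case. Theorem \ref{TEOR:G} already supplies the structural description: any $n \in \mathcal{M}_m$ factors as $n = q_1\cdots q_r\, p_1^{t_1}\cdots p_s^{t_s}$ with the exponents on the primes dividing $m$ bounded by $r_j + 1$, and with the coprime-to-$m$ primes $q_i$ all square-free and subject to $q_i - 1 \mid n$ together with $n/q_i + m \equiv 0 \pmod{q_i}$. First I would isolate the ``$m$-part'' $\prod_j p_j^{t_j}$: since each $t_j \le r_j + 1$, this factor ranges over a fixed finite set, so finiteness of $\mathcal{M}_m$ is equivalent to finiteness of the set of admissible square-free coprime parts $n_0 = q_1\cdots q_r$. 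This is the exact analogue of the splitting $\mathcal{M}_p = \mathcal{M}_p^{(0)}\cup\mathcal{M}_p^{(1)}\cup\mathcal{M}_p^{(2)}$ used throughout Section~2.

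Next I would encode the admissible coprime parts recursively, generalizing $\mathcal{Q}_p$ and $\mathcal{N}_p$. The condition $q_i - 1 \mid n$ forces every prime divisor of $q_i - 1$ to divide $n$ again, hence to be either a prime dividing $m$ or another admissible $q_{i'}$; this is precisely the closure property defining $\mathcal{Q}_p$ through clauses (i)--(iii), now with the single prime $p$ replaced by the full set $\{p_1,\dots,p_s\}$ and with $p-1\nmid q-1$ replaced by the non-divisibility conditions read off from clauses (iii)--(iv) of Theorem \ref{TEOR:G}. With such a set $\mathcal{Q}_m$ in hand, the argument of Corollary \ref{COR:DESC} would give $\mathcal{M}_m \subseteq (\text{finite})\cdot\mathcal{N}_m$, so finiteness of $\mathcal{M}_m$ follows once $\mathcal{Q}_m$ is finite, and the stabilization algorithm together with the stopping criterion of Proposition \ref{PROP:C2} could be adapted essentially verbatim.

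The cleaner route, and the one I would pursue for an unconditional statement, passes through the weak primary pseudoperfect numbers. In the prime case one has $\mathcal{M}_p \subseteq \mathcal{M}_1 \cup p\cdot\mathcal{W}$, and Lemma \ref{LEM:CAR} says exactly that $n \in \mathcal{M}_p^{(1)}\cup\mathcal{M}_p^{(2)}$ forces $n/p \in \mathcal{W}$ and $p \in \mathfrak{M}_{n/p}$. I would try to prove a general analogue: that for $n \in \mathcal{M}_m$ the part of $n$ supported away from a bounded piece is again a weak primary pseudoperfect number, so that $\mathcal{M}_m$ embeds into boundedly many dilates of $\mathcal{W}$. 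Finiteness of $\mathcal{M}_m$ for \emph{every} $m$ would then follow at once from finiteness of $\mathcal{W}$.

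Here, however, is the main obstacle. Whether $\mathcal{W}$ is finite is itself open---only nine weak primary pseudoperfect numbers are known, and even finiteness of the closely related primary pseudoperfect and Giuga numbers is unproven---so every reduction above bottoms out at an unresolved finiteness question, and the recursive construction of $\mathcal{Q}_m$ carries no a~priori termination bound, exactly as already happens at $p=5$, where $X_6[5]$ could not even be computed. I therefore expect a fully unconditional proof of the conjecture to be out of reach with the present techniques. The realistic targets are a conditional theorem (\emph{if $\mathcal{W}$ is finite, then $\mathcal{M}_m$ is finite for all $m$}) and, as in Proposition \ref{PROP:23}, ad hoc finiteness proofs for individual small $m$ in which Proposition \ref{PROP:NQ} forces the relevant value $\mathfrak{n}_Q$ to equal $1$.
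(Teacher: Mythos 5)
This statement is a \emph{conjecture}: the paper offers no proof of it, and explicitly says that the algorithms developed for the prime case ``are not operative'' for general $m$ and that ``new ideas will have to be developed.'' Your proposal is therefore not a proof, and to your credit you do not pretend it is one --- your final paragraph correctly identifies that every reduction available here bottoms out at the open finiteness of $\mathcal{W}$ (or at a recursive prime-set construction with no termination guarantee, as already happens at $p=5$). That diagnosis agrees with the paper's own assessment, and your first two paragraphs accurately reproduce the structural reduction that Theorem \ref{TEOR:G} makes available.

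One caution about the ``cleaner route'' you sketch: the containment $\mathcal{M}_p\subseteq\mathcal{M}_1\cup p\cdot\mathcal{W}$ rests on \cite[Corollary 1]{GOS}, which applies to congruences $S_n(n)\equiv a\pmod{n}$ precisely when $a\mid n$ (that is why the paper first splits off $\mathcal{M}_p^{(0)}$, where $p\nmid n$, and handles it via $\mathcal{M}_1$). For composite $m$ the elements of $\mathcal{M}_m$ need not be divisible by $m$, nor even by any fixed divisor of $m$ larger than $1$, so the claim that ``the part of $n$ supported away from a bounded piece is again a weak primary pseudoperfect number'' is not an adaptation but a genuinely new statement that would itself require proof; even the conditional theorem (``$\mathcal{W}$ finite $\Rightarrow$ $\mathcal{M}_m$ finite for all $m$'') is not established by the paper's machinery. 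So the honest summary is: the conjecture is open, your proposal does not close it, and the paper does not claim to either.
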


\bibliographystyle{acm}
\bibliography{paper.bib} 

\end{document}